\newtheorem{theorem}{Theorem}[section]
\numberwithin{equation}{section}
\algrenewcommand\algorithmicindent{0.7em}
\begin{document}

\title{A new class of maximal partial spreads in $\mathrm{PG}(4,q)$}

\author{Sandro Rajola and Maurizio Iurlo}

\date{\,}

\maketitle

\begin{abstract}
We construct a class of maximal partial line spreads in
$\mathrm{PG}(4, q)$, that we call \textit{q-added} maximal partial line spreads. We obtain them
by depriving a line spread of a hyperplane of some lines and adding $q+1$ pairwise skew
lines not of the hyperplane for each removed line. We do this through a
theoretical way for every value of $q$, and by a computer search for
 $q\leq16$. More precisely we prove that for every
$q$ there are $q$-added maximal partial line spreads from the size $q^{2}+q+1$ to the size $q^{2}+(q-1)q+1$,
while by a computer search we get larger cardinalities.
\end{abstract}

\noindent \textbf{Keywords:}\,\,\,Maximal partial line spreads - Computer search

\vspace{0,2cm}\noindent \textbf{AMS Classification:}\,\,\,51E14

\section{Introduction}
A \emph{partial line spread} $\mathcal{F}$ in $\mathrm{PG}(4, q)$,
the projective space of dimensions four  over the Galois
field $\mathrm{GF}(q)$ of order $q$,
 is a set of pairwise
skew lines. We say that $\mathcal{F}$ is \emph{maximal} if it cannot be extended to a larger
partial line spread. A \emph{line spread} in $\mathrm{PG}(3, q)$, generally in $\mathrm{PG}(n, q)$, $n$ odd, is a set of pairwise skew lines covering the space.

Maximal partial line spreads (from now on MPS) in $\mathrm{PG}(4, q)$ have
been investigated by several authors, but little is known about them  (see \cite{gaszon2}, \cite{gova}).
The smallest examples are the spreads in hyperplanes (of size
$q^{2}+1$). 

A. Beutelspacher determined the largest
examples, which are of size $q^{3}+1$,  and also found examples inside the interval $\left[
q^{2}+1, q^{2}+q\sqrt{q}-\sqrt{q}\right]$ (see \cite{beut1}). Afterwards, we only have a
density result by J. Eisfeld, L. Storme and P. Sziklai \cite{eisf}, precisely the interval
[$q^{3}-q+3, q^{3}+1$].

Here we construct a particular class of MPS in $\mathrm{PG}(4, q)$. We do
it for every value of $q$ by using theoretical methods and for  $q\leq 16$ by a computer search. 

We start from a line spread $\mathcal{F_{\mathcal{H}}}$ of a hyperplane
$\mathcal{H}$ of $\mathrm{PG}(4, q)$.  
We begin by depriving $\mathcal{F_H}$ of a line $r_1%
\in\mathcal{H}$. Starting from $\mathcal{F}_\mathcal{H}\backslash\{r_1\}$
we construct a new maximal partial line spread $\mathcal{F}_\mathcal{H}^{\prime}$, by
adding $q+1$ pairwise skew lines not of $\mathcal{H}$ and covering the line $r_1$.
Afterwards we deprive $\mathcal{F}_\mathcal{H}^{\prime}$ of a line $r_2\in\mathcal{H}$ and add $q+1$ pairwise skew lines not of
$\mathcal{H}$, meeting $r_2$ and not meeting the previous added lines, obtaining a MPS
$\mathcal{F}_\mathcal{H}^{\prime\prime}$, and so on.

We call $(\mathcal{F_{\mathcal{H}}}, q)$-\textit{added maximal partial line spread}, or briefly $q$-\textit{added maximal partial line spread}, the MPS
obtained in this way.
We prove that this construction can be repeated for $q-1$ times, for every value of $q$.
So we get $q$-added MPS of size $q^{2}+kq+1$, for every integer $k=1, \ldots, q-1$.

By a computer search the previous construction can be repeated for a larger
number of times, for  $q\leq 16$. More precisely, we construct all the 
$q$-added MPS with size between $q^{2}+q+1$ and $q^{2}+k(q)q+1$,
where $k(3)=4$, $k(4)=7$, $k(5)=10$, $k(7)=19$, $k(8)=26$, $k(9)=33$, $k(11)=46$, $k(13)=62$ and $k(16)=87$. 

We remark that the times for the computer search are restricted: for instance,
the case $q=11$ needs a time less than 80 seconds, by a notebook with processor Intel Core i5-430M, 4 GB RAM.

\section{A geometric construction of the $q$-added maximal partial line spreads}

We start by proving the following theorem.

\begin{theorem}\label{teo2.1}

In $\mathrm{PG}(4, q)$, $q$ a prime power, let $S$ be a hyperplane
and $X$ a point of $S$. Let $\mathcal{L}$ be a set of pairwise
skew lines not of $S$ and not through $X$, such that $\left|\mathcal{L}\right|<q^{2}$.
Then there is a line through $X$ not of $S$ skew with every line of $\mathcal{L}$.

\end{theorem}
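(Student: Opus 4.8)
The plan is to argue by a straightforward counting (union-bound) over the lines of $\mathcal{L}$, comparing the number of lines through $X$ not lying in $S$ with the number of such lines that can be \emph{blocked} by meeting some member of $\mathcal{L}$.

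First I would count the candidate lines, namely the lines through $X$ not contained in $S$. Passing to the quotient geometry $\mathrm{PG}(4,q)/X\cong\mathrm{PG}(3,q)$, the lines through $X$ correspond to the points of this $\mathrm{PG}(3,q)$, and those lying in $S$ correspond to the points of the plane $S/X$; hence there are $(q^{3}+q^{2}+q+1)-(q^{2}+q+1)=q^{3}$ candidate lines. Equivalently, one counts directly that $S$ carries $q^{2}+q+1$ of the $q^{3}+q^{2}+q+1$ lines through $X$ of the whole space.

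Next, for a fixed $\ell\in\mathcal{L}$ I would count how many candidate lines meet $\ell$. Since $X\notin\ell$ (the lines of $\mathcal{L}$ avoid $X$), the span $\pi=\langle X,\ell\rangle$ is a plane, and a line through $X$ meets $\ell$ exactly when it lies in $\pi$ (two lines of a plane always meet); there are $q+1$ such lines. The crucial point is that precisely one of them lies in $S$: because $\ell\not\subseteq S$ while $X\in S$, the plane $\pi$ is not contained in the hyperplane $S$, so $\pi\cap S$ is a single line, and it passes through $X$. This is the unique line through $X$ in $\pi$ contained in $S$, so each $\ell$ blocks exactly $q$ candidate lines.

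Finally I would combine the counts. By the union bound, the number of candidate lines blocked by some member of $\mathcal{L}$ is at most $q\,|\mathcal{L}|\le q(q^{2}-1)=q^{3}-q<q^{3}$. Since there are exactly $q^{3}$ candidate lines, at least $q$ of them are blocked by no line of $\mathcal{L}$, and any such line is the desired line through $X$, not in $S$, skew to every line of $\mathcal{L}$. The one real subtlety — the step I would verify most carefully — is the exact per-line count of $q$: one must be sure that $\pi\cap S$ is genuinely a single line through $X$ (and not all of $\pi$, which is exactly where the hypothesis $\ell\notin S$ enters), so that precisely $q$, rather than $q+1$, of the coplanar lines through $X$ are excluded. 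Pinning down this constant is what makes the hypothesis $|\mathcal{L}|<q^{2}$ sufficient.
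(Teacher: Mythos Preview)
Your proof is correct and follows essentially the same counting argument as the paper: both establish that there are $q^{3}$ lines through $X$ not in $S$, and that at most $q\,|\mathcal{L}|<q^{3}$ of them can meet a line of $\mathcal{L}$. The only cosmetic difference is that the paper packages the per-line count via the point set $L=\bigcup_{\ell\in\mathcal{L}}\ell\setminus S$ (of size $q\,|\mathcal{L}|$, with distinct candidate lines hitting distinct points of $L$), whereas you obtain the same bound by looking at the plane $\langle X,\ell\rangle$ and discarding the one line $\pi\cap S$; these are two phrasings of the same bijection.
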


\begin{proof}

Through the point $X$ there are $\theta_{3}-\theta_{2}=q^{3}$ (where $\theta_r=\sum_{i=0}^{r}q^i$) lines
not of $S$ and therefore having only the point $X$ in common with $S$. Let $L$ be the following point
set:
\[
L=\bigcup_{\ell\in\mathcal{L}}\ell-S.
\]
Evidently, we have:
\begin{equation}
\left|L\right|=q\left|\mathcal{L}\right|.	\label{th1}
\end{equation}
Assume that every line through $X$ and not of $S$ meets some lines
of $\mathcal{L}$. Since there are $q^3$ lines 
having only the point $X$ in common with $S$ and since such lines can meet $\bigcup_{\ell\in\mathcal{L}}\ell$ only 
at points of $L$, we have 
\begin{equation}
\left|L\right|\geq q^{3}.	\label{th2}
\end{equation}
By (\ref{th1}) and (\ref{th2}) we get
\begin{equation}
\left|\mathcal{L}\right|\geq q^{2}.\label{th3}
\end{equation}
The inequality (\ref{th3}) is a contradiction, since $\left|\mathcal{L}\right|<q^{2}$.
The contradiction proves that there is a line through $X$, not of
$S$ and not meeting any line of $\mathcal{L}$. So the theorem is proved.

\end{proof}

Now let $\mathcal{F}$ be a spread of a hyperplane $S$ 
and let $r_{1}, r_{2}, \ldots, r_{q-1}$ be $q-1$ 
 lines of $\mathcal{F}$. By using Theorem \ref{teo2.1} we find $q+1$
mutually skew lines, $r_{1}^{1}, r_{1}^{2}, \ldots, r_{1}^{q+1}$, not
of $S$ and covering $r_{1}$. The line set
\[
\mathcal{F}_{1}=\left(\mathcal{F}-\left\{ r_{1}\right\} \right)\bigcup\left\{ r_{1}^{1}, r_{1}^{2}, \ldots, r_{1}^{q+1}\right\} 
\]
is a $q$-added MPS with size $q^{2}+q+1$. By using Theorem \ref{teo2.1} we find
$q+1$ mutually skew lines, $r_{2}^{1}, r_{2}^{2}, \ldots, r_{2}^{q+1}$
not of $S$, covering $r_2$ and not meeting $r_{1}^{1}, r_{1}^{2}, \ldots, r_{1}^{q+1}$.
The line set 
\[
\mathcal{F}_{2}=\left(\mathcal{F}_{1}-\left\{ r_{2}\right\} \right)\bigcup\left\{ r_{2}^{1}, r_{2}^{2}, \ldots, r_{2}^{q+1}\right\} 
\]
is a $q$-added MPS, with size $q^{2}+2q+1$. Theorem \ref{teo2.1} allows us
to construct $q$-added MPS up to the cardinality $q^{2}+\left(q-1\right)q+1$,
by covering all the lines $r_1, r_2, \ldots, r_{q-1}$.

So we get the following theorem.

\begin{theorem}\label{teo2.2}

In $\textnormal{PG}(4, q)$, $q$ a prime power, there are $q$-added
maximal partial line spreads of size $q^{2}+kq+1$, for every integer $k=1, 2, \ldots, q-1$.

\end{theorem}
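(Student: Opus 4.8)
The plan is to turn the inductive construction sketched above into a complete argument by (i) fixing the bookkeeping that lets each new line be chosen skew to everything already present, (ii) reading off the cardinality, and (iii) verifying that the resulting family is genuinely maximal, since the word ``Mps'' in the statement must be justified rather than assumed. First I would run the induction on the number of \emph{added} lines, not on $k$. Starting from the spread $\mathcal{F}$ of $S$ (which has $q^{2}+1$ lines), I process the removed lines $r_{1},\ldots,r_{q-1}$ in order, and within each $r_{i}$ I treat its $q+1$ points one at a time. When I am about to add a line through a point $X\in r_{i}$, I set $\mathcal{L}$ equal to the set of all lines added so far. Each such line lies off $S$ and meets $S$ in a single point, belonging either to some $r_{j}$ with $j<i$ or to an already-treated point of $r_{i}$; since distinct spread lines are skew and the treated points of $r_{i}$ differ from $X$, no line of $\mathcal{L}$ passes through $X$. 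Hence Theorem~\ref{teo2.1} applies and produces a line through $X$, off $S$, skew to every line of $\mathcal{L}$. Taking it as the new line keeps the added lines mutually skew; and because each added line meets $S$ only at a point of some removed $r_{i}$ — a point lying on no surviving spread line — every added line is automatically skew to all the spread lines retained in $S$. Thus at each stage the current family is a partial spread.

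The quantitative step is to check that the hypothesis $|\mathcal{L}|<q^{2}$ holds throughout. Covering $r_{1},\ldots,r_{k}$ requires $k(q+1)$ additions in total, and the last of them faces $|\mathcal{L}|=k(q+1)-1$. This is exactly where the ceiling $k\le q-1$ originates: the condition $k(q+1)-1<q^{2}$ is equivalent to $k\le q^{2}/(q+1)=q-1+1/(q+1)$, i.e.\ $k\le q-1$ for integers; attempting to cover a further line would, after one more step, force an application of Theorem~\ref{teo2.1} with $|\mathcal{L}|\ge q^{2}$, beyond its guarantee. For $k\le q-1$ the construction therefore goes through, and the cardinality of $\mathcal{F}_{k}$ follows at once: removing $k$ lines from the $(q^{2}+1)$-line spread and inserting $k(q+1)$ new lines gives $q^{2}+1-k+k(q+1)=q^{2}+kq+1$.

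The step I expect to be the main obstacle is maximality, since nothing before the statement actually rules out an extension. Here the decisive observation is that $\mathcal{F}_{k}$ covers every point of $S$: points off $r_{1}\cup\cdots\cup r_{k}$ lie on retained spread lines, while every point of a covered $r_{i}$ lies on exactly one of its added lines. Now let $m$ be an arbitrary line of $\mathrm{PG}(4,q)$. By the dimension formula $\dim(m\cap S)\ge 1+3-4=0$, so $m$ meets $S$ in at least one point $Q$, and $Q$ lies on some line $\ell\in\mathcal{F}_{k}$. Then $Q\in m\cap\ell$, so $m$ is not skew to $\mathcal{F}_{k}$. Since no line can be added, each $\mathcal{F}_{k}$ is maximal, which establishes the existence of $q$-added maximal partial spreads of size $q^{2}+kq+1$ for every $k=1,\ldots,q-1$.

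I would finally remark that the same coverage argument explains why the intermediate stages within a single covering need not be maximal — a partially covered $r_{i}$ leaves uncovered points of $S$ — so maximality is asserted only for the completed families $\mathcal{F}_{1},\ldots,\mathcal{F}_{q-1}$, exactly the configurations named in the theorem.
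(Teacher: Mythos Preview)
Your proof is correct and follows essentially the same route as the paper: iterate Theorem~\ref{teo2.1} point by point along the removed lines $r_1,\ldots,r_{q-1}$, checking that $|\mathcal{L}|\le (q-1)(q+1)-1=q^2-2<q^2$ throughout. You in fact supply more detail than the paper does---in particular the explicit maximality argument via coverage of $S$, which the paper leaves implicit---so nothing is missing.
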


\section{Computer search of $q$-added maximal partial line spreads in $\mathrm{PG}(4, q)$, $q$ a prime}

\subsection{The algorithm}
We are able to construct the Pl\"{u}cker coordinates of the lines of a spread
$\mathcal{F}$ of $\mathrm{PG}(3, q)$, by using the programs for the computer search of MPS
in $\mathrm{PG}(3, q)$ contained in \cite{iura,iura2}.

In the case $q=p^h$, we use the following spread (see \cite{Hir}, 17.3.3).

Let $q=p^{h}$, with $h>1$ and let $x^{p+1}+bx-c$ be a polynomial
without roots in $\mathrm{F=GF}(q).$ Then the set 
\begin{equation*}
\left\{ \left\langle \left(1, 0, 0, 0\right),\,\left(0, 1, 0, 0\right)\right\rangle \right\} \cup\left\{ \left\langle \left(z, y, 1, 0\right),\,\left(cy^{p}, z^{p}+by^{p}, 0, 1\right)\right\rangle \parallel\left(y, z\right)\in\mathrm{F}^{2}\right\} 
\end{equation*}
is an aregular spread of $\mathrm{PG}(3, q)$.
 
In the case $q$ a prime, with $\gcd(q+1, 3)=3$, we use either the spread 
obtained by A.\thinspace A.\thinspace Bruen and J.\thinspace W.\thinspace P.\thinspace Hirschfeld and formed by
tangent lines, imaginary chords and imaginary axes of a twisted cubic
(see \cite{BH}), or a spread obtained by a computer search. In the case 
$q$ a prime, but not with $\gcd(q+1, 3)=3$, we use a spread obtained by a computer search.

Starting from the Pl\"{u}cker
coordinates $\left(p_{01}, p_{02}, p_{03}, p_{12}, p_{13}, p_{23}\right)  $ of
the lines of $\mathcal{F}$ in $\mathrm{PG}(3, q)$, we consider the 10-tuples%
\[
\left( p_{01}, p_{02}, p_{03}, p_{04}=0, p_{12}, p_{13}, p_{14}=0, p_{23}%
, p_{24}=0, p_{34}=0\right)  .
\]
It is easy to check that the above 10-tuples are the Pl\"{u}cker coordinates
of pairwise skew lines of $\mathrm{PG}(4, q)$ and that the hyperplane
$\mathcal{H}$ of equation $x_{4}=0$ contains all of them.
So we obtain a spread $\mathcal{F_H}$ of $\mathcal{H}$, which is a
maximal partial line spread of $\mathrm{PG}(4, q).$

We begin by depriving $\mathcal{F_H}$ of a line $r_1%
\in\mathcal{H}$. Starting from $\mathcal{F}_\mathcal{H}\backslash\{r_1\}$
we construct a new maximal partial line spread $\mathcal{F}_\mathcal{H}^{\prime}$, by
adding $q+1$ pairwise skew lines not of $\mathcal{H}$ and covering the line $r_1$.
Afterwards we deprive $\mathcal{F}_\mathcal{H}^{\prime}$ of a line $r_2\in\mathcal{H}$ and add $q+1$ pairwise skew lines not of
$\mathcal{H}$, meeting $r_2$ and not meeting the previous added lines, obtaining a MPS
$\mathcal{F}_\mathcal{H}^{\prime\prime}$, and so on.

\subsection{Result check}

We remark that the algorithm for the construction of the Pl\"{u}cker
coordinates is similar to what we used in two previous articles (see
\cite{iura,iura2}), where we constructed the Pl\"{u}cker
coordinates of the lines of $\mathrm{PG}(3, q)$.

Obviously, we verify the
correctness of our construction by several tests.

In particular, to verify the construction of the Pl\"{u}cker coordinates 
and the writing of the incidence line conditions, we write a (very simple)
program that calculates,
for every line $\ell$, the number $n(\ell)$ of the lines
meeting $\ell$. We do it entirely for $q=2, 3, 4, 5, 7, 8$ and partially for
$q=9, 11, 13, 16$ and we always find  $n(\ell)=\left(  q^{3}+q^{2}+q\right)
\left(  q+1\right)  +1$. The
number $n(\ell)$ has been calculated 	
approximately one million of times. This kind of test guarantees the correctness
of the construction of the Pl\"{u}cker coordinates and the writing of the
incidence relations.

Concerning the correctness of the obtained maximal partial spreads, we do the
following tests.

Firstly, we check that the $q^{2}+1$ lines of $\mathcal{F}$, that we
use as initial partial spread, form a set of pairwise skew lines and so a
spread of $\mathcal{H}$, which trivially is a maximal partial spread of
$\mathrm{PG}(4, q).$ 

Secondly, we test our biggest $q$-\thinspace added MPS by checking that
its lines are pairwise skew and that all the added lines are not of
$\mathcal{H}$. The first property is verified either through a macro in Microsoft Excel 
(in the case $q$ a prime) or with a  program in C language ($q$ a prime or not). 
The macro of Microsoft Excel is similar to the
macro used in \cite{iura,iura2}.  The
other property is verified by using very easy instructions,
which check that at least one of the Pl\"{u}cker coordinates $p_{04},$
$p_{14},$ $p_{24}$ and $p_{34}$ is different from zero.

Obviously, we test the ``test program''. We include some set $\mathcal{L}$ of lines of $\mathrm{PG}(4, q)$ 
and the program calculates the number $n(\ell)$, for each line of $\mathcal{L}$, and the number $n\left(\mathcal{L}\right)$ 
of lines of $\mathrm{PG}(4, q)$ meeting a line of $\mathcal{L}$.
\begin{enumerate}
\item We include in the test program $n$ coincident lines  and we always obtain the numbers
$n\left(\mathcal{L}\right)=\left(q^{3}+q^{2}+q\right)\left(q+1\right)+1$ and
$n(\ell)=n$, for every line.
\item We include in the test program two skew lines and the previous numbers
in this case are $n\left(\mathcal{L}\right)=2\left(\left(q^{3}+q^{2}+q\right)\left(q+1\right)+1\right)-\left(q+1\right)^{2}$ and $n(\ell)=1$, for each the two lines.
\item We include a spread $\mathcal{L}$ of a hyperplane of $\mathrm{PG}(4, q)$
(spread already used and tested in $\mathrm{PG}(3, q)$, see \cite{iura,iura2})
obtaining in this case $n(\ell)=1$, for each line, and $n\left(\mathcal{L}\right)=\theta_4\theta_3/\theta_1$,
that is the number of lines
of $\mathrm{PG}(4, q)$.
After
this, we tested the spread in the following way: the spread is deprived of a line and the program
replies that it is not maximal (the previous numbers are $n(\ell)=1$ and $n\left(\mathcal{L}\right)<\theta_4\theta_3/\theta_1$). Furthermore, we tested the set
of lines
obtained by adding a line to the spread and the program
answers that it is not a set of pairwise skew lines and $n\left(\mathcal{L}\right)=\theta_4\theta_3/\theta_1$.
\end{enumerate}

Concerning the programs which construct MPS in $\mathrm{PG}(4, p^{h})$,
with $p$ a prime and $h>1$, we remark that the used tables of sum and product are the
same
used and tested in \cite{iura2}. 

Furthermore, for $\mathrm{PG}(4, 2)$ and $\mathrm{PG}(4, 3)$ we construct
MPS either by using operations $\operatorname{mod}p$ or the tables of sum
and product, and we obtain the same results.

In addition to this we remark that the program never gives results against the
theory. In particular, in the case $\mathrm{PG}(4, 2)$, for which there is a
complete characterization of the MPS, the program
constructs MPS of size 7 and 9, according to the above characterization 
which asserts that in $\mathrm{PG}(4, 2)$ the only cardinalities for the MPS are 5, 7 and 9.

\section{Results}
In this paper we obtain MPS $\mathcal{F}$ of cardinality
$q^{2}+kq+1,$ with $k$ integer, which assumes all the values from 1 to the
maximum value $k_{\max}.$ In the following table we report the values of $q,$
$k_{\max}$ and the minimum and the maximum values of $\left\vert
\mathcal{F}\right\vert .$

\begin{longtable}{cccc}
\caption{}
\label{tab:longtable2}\\
\toprule
$q$ & $k_{\max}$ & $\left\vert \mathcal{F}_{\min}\right\vert =q^{2}+q+1$ & $\left\vert \mathcal{F}_{\max}\right\vert =q^{2}+k_{\max}q+1$ \\ 
\hline 
\endhead
\endfoot
3 & 4 & 13 & 22  \tabularnewline \hline 
4 & 7 & 21 & 45 \tabularnewline \hline
5 & 10 & 31  & 76 \tabularnewline \hline
7 & 19 & 57 & 183 \tabularnewline \hline
8 & 26 & 73 & 273  \tabularnewline \hline
9 & 33 & 91 & 379 \tabularnewline \hline
11 & 46  & 133 & 628 \tabularnewline \hline
13 & 62 & 183 & 976  \tabularnewline \hline
16 & 87 & 273 & 1649  \\
\bottomrule
\end{longtable}

We remark that some above values are already obtained by theoretic ways.

\section{Conclusion}
This work began by a computer search which allowed us to find several results and stimulated the theoretical research the results of which
are given by Theorem \ref{teo2.2}. 

The results of computer search prove that there is still more to be discovered. The analysis of our results
makes you think that the number of the $q$-added maximal partial line spreads increases with $q$.

We have realized that it is possible to study the same problem for larger values of the order $q$.

\begin{center}
--------------------
\end{center}
\noindent Maurizio Iurlo\\
\noindent Largo dell'Olgiata, 15/106/1C\\
\noindent 00123 Roma \\
\noindent Italy\\
\noindent maurizio.iurlo@istruzione.it\\
\noindent http://www.maurizioiurlo.com \\
\quad\\
\noindent Sandro Rajola\\
\noindent Istituto Tecnico per il Turismo ``C. Colombo''\\
\noindent Via Panisperna, 255\\
\noindent 00184 Roma\\
\noindent Italy\\
\noindent sandro.rajola@istruzione.it

\end{document}